\newcommand{\C}{{\mathbb C}}
\newcommand{\lb}{\lambda}
\newcommand{{\pf}}{{\bf Proof. }}
\newtheorem{thm}{Theorem}[section]
\newtheorem{corr}[thm]{Corollary}
\newtheorem{lem}[thm]{Lemma}
\newtheorem{prop}[thm]{Proposition}
\newtheorem{deff}[thm]{Definition}
\newtheorem{prob}[thm]{Problem}
\newtheorem{definition}[thm]{Definition}
\newtheoremstyle{case}{}{}{}{}{}{:}{ }{}
\theoremstyle{case}
\newtheorem{claim}{Claim}
\DeclareMathOperator{\tr}{tr}
\DeclareMathOperator{\ran}{ran}
\makeatletter\@addtoreset{equation}{section} \makeatother
\tikzset{
  treenode/.style = {align=center, inner sep=1pt, text centered,
    font=\sffamily},
  arn_n/.style = {treenode, circle, white, font=\sffamily\bfseries, draw=black,
    fill=black, text width=1.5em},% arbre rouge noir, noeud noir
  arn_r/.style = {treenode, circle, black, draw=black,
    text width=1.5em, very thick},% arbre rouge noir, noeud rouge
  arn_x/.style = {treenode, rectangle, draw=black,
    minimum width=0.5em, minimum height=0.5em}% arbre rouge noir, nil
}
\begin{document}
%\mag=1200

\title{The Characteristic Polynomial of Projections}

\author[K. Howell]{Kate Howell$^*$}
\address{Kate Howell: Department of Mathematics and Statistics, SUNY at Albany,
Albany, NY 12222, USA} \email{khhowell@albany.edu}

\author[R. Yang]{Rongwei Yang}
\address{Rongwei Yang: Department of Mathematics and Statistics, SUNY at Albany,
Albany, NY 12222, USA} \email{ryang@math.albany.edu}

\maketitle

\footnotetext{2010 \emph{Mathematics Subject
Classification}: Primary 47A13; Secondary 20E08 and 20Cxx.}
\footnote{\emph{Key words and phrases}: projection matrix, characteristic polynomial, Coxeter group}
\footnote{$^*$ A part of this paper is contained in the doctoral dissertation of the first author.}

\begin{abstract}
This paper proves that the characteristic polynomial is a complete unitary invariant for pairs of projection matrices. Some special cases involving three or more projections are also considered.
\end{abstract}

\section{Introduction}

The characteristic polynomial of a square matrix $A$ is defined by $\det (zI-A)$. It is a basic subject of study in linear algebra and matrix theory, with a wide range of applications in mathematics, science, and engineering. A multivariable version of the characteristic polynomial, on the other hand, has been ostensibly missing until recently. The challenge lies in the difficulty to define a computable notion when involved matrices are noncommuting. Motivated by the early work on group representation theory \cite{Fr,Di75}, determinantal representations \cite{Di21,HV}, and the projective joint spectrum of linear operators \cite{Ya09}, a proper notion of a multivariable characteristic polynomial seems to settle as follows.

 \begin{deff} Given square matrices $A_1, ..., A_n$ of equal size, their characteristic polynomial $Q_A(z):=\det(z_0I+z_1A_1+\cdots+z_nA_n), z=(z_0, ..., z_n)\in \C^{n+1}$. 
 \end{deff}
\noindent Here, the subscript ``$A$'' in $Q_A$ refers to the multiparameter linear pencil (denoted by $A(z)$) inside the determinant. Recent studies have shown that $Q_A$ offers us an efficient tool to describe the algebraic properties of the involved matrices. A few facts will substantiate this claim. First, it is not hard to see that if $Q_A$ is irreducible, then $A_1, ..., A_n$ have no nontrivial common invariant subspace. Further, if $Q_A$ has a linear factor $z_0+\lambda_1z_1+...+\lambda_nz_n$, then $\lambda_j$ is an eigenvalue of $A_j$, $1\le j\le n$. This becomes apparent if one sets variables $z_k=0$ for all $k\neq 0, j$. Some deeper results are as follows.

{\bf 1}. If $A_1, ..., A_n$ are normal matrices, then they commute if and only if $Q_A(z)$ is a product of linear factors \cite{CSZ}.

{\bf 2}. If $A_1, ..., A_n$ are Coxeter generators of a finite Coxeter group, then $Q_A$ determines the group up to isomorphism \cite{CST}.

{\bf 3}. If $G=\{1, g_1, ..., g_n\}$ is a finite group and $A_i=\lb(g_i)$, where $\lb$ is the regular representation of $G$, then there is a one-to-one correspondence between the distinct factors of $Q_A$ and the equivalence classes of irreducible representations of $G$ \cite{Fr}.

{\bf 4}. If ${\mathcal L}$ is an $n$-dimensional complex Lie algebra and $A_1, ..., A_n$ are the adjoint representations of a basis, then $Q_A$ is invariant under the automorphism group of ${\mathcal L}$ \cite{AY}. Moreover, ${\mathcal L}$ is solvable if and only if $Q_A(z)$ is a product of linear factors \cite{HZ19}.

{\bf 5}. Finite dimensional simple Lie algebras can be classified by their characteristic polynomials \cite{GLW}.

For a detailed account of these developments, we refer the reader to \cite{Ya}. This paper aims to study the characteristic polynomial of several projection matrices, and the main result is Theorem \ref{main} which asserts that it is a complete invariant for such pairs. Some relevant issues are also discussed.\\

\noindent {\bf Acknowledgment.} The first author would like to thank her husband, Jason Howell, for his support, encouragement, and prayers. The authors thank the referee for a suggestion which inspired Subsection 2.2.

\section{Preliminaries}

Throughout this paper, we assume the matrix tuples consist of square matrices of the same size. Given a $k\times k$ matrix $A$, we define $c_0( A)=1$ and 
\begin{equation}\label{wedgeA}
c_m(A)=\frac{1}{m!}\det \begin{pmatrix}
                                          \tr A  &   m-1 & 0 & \cdots & 0 \\
\tr A^2  & \tr A&  m-2 & \cdots & 0 \\
 \vdots & \vdots & \vdots & \ddots & \vdots  \\
\tr A^{m-1} & \tr A^{m-2}& \cdots & \cdots & 1    \\ 
\tr A^m  & \tr A^{m-1}& \cdots  & \cdots & \tr A
\end{pmatrix},\ \ 1\leq m\leq k.
\end{equation}
Then the {\em Plemelj-Smithies formula} \cite{GGK}, expresses the characteristic polynomial of $A$ in the form 
\begin{equation}\label{cpexpand}
\det (\lb+A)=\sum_{m=0}^k \lb^{k-m}c_m(A).
\end{equation}
Hence, for matrices $A_1, \dots, A_n\in M_k(\C)$, if we set $A_*(z')=z_1A_1+\cdots +z_nA_n$, where $z'=(z_1, ..., z_n)$, the function $Q_A(z_0, z')$ is the classical characteristic polynomial of the matrix $A_*(z')$. The following fact is thus immediate.
\begin{prop}\label{eigen}
Assume $A_j, B_j\in M_k(\C), 1\leq j\leq n$, and $Q_A=Q_B$. Then for each $z'\in \C^n$ the matrices
$A_*(z')$ and $B_*(z')$ have the same set of eigenvalues counting multiplicity.
\end{prop}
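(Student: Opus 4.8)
The plan is to reduce the claim to unique factorization of one‑variable polynomials, exploiting the observation recorded immediately before the statement that, for a fixed $z'$, the function $z_0\mapsto Q_A(z_0,z')=\det\bigl(z_0I+A_*(z')\bigr)$ is the classical characteristic polynomial of $A_*(z')$ (up to the substitution $z_0\mapsto-\lambda$ and the harmless overall factor $(-1)^k$). So first I would fix an arbitrary $z'\in\C^n$ and regard $Q_A(\cdot,z')$ and $Q_B(\cdot,z')$ as elements of $\C[z_0]$. Each is monic of degree $k$, and by the Plemelj--Smithies expansion \eqref{cpexpand} applied with $\lb=z_0$ and $A=A_*(z')$ one has $\det\bigl(z_0I+A_*(z')\bigr)=\prod_{i=1}^k(z_0+\mu_i)$, where $\mu_1,\dots,\mu_k$ are the eigenvalues of $A_*(z')$ listed with algebraic multiplicity; likewise $\det\bigl(z_0I+B_*(z')\bigr)=\prod_{j=1}^k(z_0+\nu_j)$ for the eigenvalues $\nu_j$ of $B_*(z')$.

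Next I would use the hypothesis $Q_A=Q_B$: since these agree as polynomials on $\C^{n+1}$, they agree for every value of $z_0$ once $z'$ is frozen, hence $\prod_{i=1}^k(z_0+\mu_i)=\prod_{j=1}^k(z_0+\nu_j)$ as an identity in $\C[z_0]$. Unique factorization in $\C[z_0]$ then forces the multisets $\{-\mu_i\}_{i=1}^k$ and $\{-\nu_j\}_{j=1}^k$ to coincide, so $\{\mu_i\}=\{\nu_j\}$ as multisets, which is precisely the assertion that $A_*(z')$ and $B_*(z')$ have the same eigenvalues counting multiplicity. As $z'$ was arbitrary, this finishes the argument; one could equivalently phrase it without ever invoking the classical characteristic polynomial, simply noting that the roots of the monic polynomial $z_0\mapsto Q_A(z_0,z')$ are, with multiplicity, the negatives of the eigenvalues of $A_*(z')$.

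I do not anticipate a real obstacle here, which is why the statement is labelled ``immediate'': the only points requiring a modicum of care are the bookkeeping of the sign and variable substitution relating $\det(z_0I+M)$ to the usual $\det(\lambda I-M)$, and the standard fact that over $\C$ two matrices share a characteristic polynomial exactly when their eigenvalue multisets (with algebraic multiplicity) coincide. Everything else is a one‑line appeal to \eqref{cpexpand} together with factorization in $\C[z_0]$.
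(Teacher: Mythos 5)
Your argument is correct and is exactly the reasoning the paper has in mind: the paper records that $Q_A(z_0,z')$ is the classical characteristic polynomial of $A_*(z')$ and then declares the proposition ``immediate,'' which unpacks precisely to your observation that for each fixed $z'$ the two monic degree-$k$ polynomials in $z_0$ coincide and hence have the same root multisets, i.e.\ the same eigenvalues with algebraic multiplicity. The only cosmetic quibble is that the factorization $\det(z_0I+M)=\prod_i(z_0+\mu_i)$ is standard linear algebra over $\C$ rather than a consequence of the Plemelj--Smithies formula \eqref{cpexpand}, but this does not affect the validity of the proof.
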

\noindent  One observes that in this case the classical spectra $\sigma(A_j)=\sigma(B_j)$ for each $j$.
 Expanding $Q_A$ as a polynomial in $z_0$, one may write
\begin{equation}\label{Q_A}
Q_A(z)=\sum_{m=0}^{k}z_0^{k-m}q_m(z^{'}),\ \ \ z\in \C^{n+1},
\end{equation}
where $q_m(z')=c_m(A_{*}(z'))$. The following lemma is thus obtained by applying the Plemelj-Smithies formula to $A_*(z')$.
\begin{lem}\label{q1,q2}
For any $k\times k$ matrices $A_1,...,A_n$, we have
\begin{enumerate}[a)]
\item $q_1(z')=\sum^{n}_{j=1}z_j\tr A_j$.
\item $q_2(z')=\frac{1}{2}\sum^{n}_{i,j=1}z_iz_j(\tr A_i\tr A_j-\tr(A_iA_j))$.
\end{enumerate}
\end{lem}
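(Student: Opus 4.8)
The plan is to apply the Plemelj--Smithies formula \eqref{cpexpand} to the single $k\times k$ matrix $A_*(z')=z_1A_1+\cdots+z_nA_n$, using the identity $q_m(z')=c_m(A_*(z'))$ recorded just after \eqref{Q_A}. In this way everything reduces to two routine steps: evaluating the determinants in \eqref{wedgeA} for $m=1$ and $m=2$, and then expanding the resulting traces of $A_*(z')$ using linearity and bilinearity of the trace.

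For $m=1$ the matrix in \eqref{wedgeA} is the $1\times 1$ matrix $(\tr A)$, so $c_1(A)=\tr A$; applying this with $A=A_*(z')$ and using linearity of the trace gives $q_1(z')=\tr A_*(z')=\sum_{j=1}^{n}z_j\tr A_j$, which is part (a). For $m=2$ the matrix in \eqref{wedgeA} is $\begin{pmatrix}\tr A & 1\\ \tr A^2 & \tr A\end{pmatrix}$, so $c_2(A)=\tfrac12\bigl((\tr A)^2-\tr A^2\bigr)$. Substituting $A=A_*(z')$, I would expand $(\tr A_*(z'))^2=\bigl(\sum_i z_i\tr A_i\bigr)\bigl(\sum_j z_j\tr A_j\bigr)=\sum_{i,j}z_iz_j\tr A_i\tr A_j$ and $\tr\bigl(A_*(z')^2\bigr)=\tr\bigl(\sum_{i,j}z_iz_jA_iA_j\bigr)=\sum_{i,j}z_iz_j\tr(A_iA_j)$, and then part (b) follows by subtracting and halving.

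I do not expect any genuine obstacle: the statement is a formal consequence of the Plemelj--Smithies formula together with multilinearity of the trace, and the two displayed identities drop out after a one-line expansion. The only point requiring a little care is the bookkeeping in \eqref{wedgeA} — that is, correctly reading off $c_1$ and $c_2$ as the first two Newton-identity combinations of the power sums $\tr A$, $\tr A^2$ — after which no further work is needed.
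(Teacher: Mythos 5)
Your proposal is correct and is exactly the argument the paper intends: the paper proves the lemma simply by "applying the Plemelj--Smithies formula to $A_*(z')$," and your computation of $c_1(A)=\tr A$ and $c_2(A)=\tfrac12\bigl((\tr A)^2-\tr A^2\bigr)$ from \eqref{wedgeA}, followed by the bilinear expansion of the traces, fills in precisely the details the paper leaves implicit. No issues.
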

\noindent With greater effort, $q_m$ for $m\geq 3$ can also be determined in a similar way. However, a more elegant description of $q_m$ can be given by the cofactor matrix. For a general matrix $A=(a_{ij})\in M_k(\C)$, we let $C = (C_{ij})$ be its cofactor matrix, i.e., $C_{ij} = (-1)^{i+j}\det(A_{ij})$, where $A_{ij}$ is the $(i,j)$th minor of $A$. The cofactor theorem states that
\begin{equation}\label{cofactor} 
\det A = \displaystyle \sum_{m=1}^{k} a_{im}C_{im},\ \ \ 1\leq i\leq k.
\end{equation} Treating $\det A$ as a multivariable function in its entries $a_{ij}, 1\leq i, j\leq k$, we obtain
\begin{align*}
\dfrac{\partial \det A}{\partial a_{ij}}&= \sum_{m=1}^k\bigg( \dfrac{\partial a_{im}}{\partial a_{ij}} C_{im} + a_{im}\dfrac{\partial C_{im}}{\partial a_{ij}}\bigg)\\
&=C_{ij}+ \sum_{m=1}^ka_{im}\dfrac{\partial C_{im}}{\partial a_{ij}}.
\end{align*}
Since $C_{im}$ is independent of the $i$th row of $A$, we have $\dfrac{\partial C_{im}}{\partial a_{ij}}=0$ for every $m$. It follows that $\dfrac{\partial \det A}{\partial a_{ij}}=C_{ij}, 1\leq i, j\leq k$. Thus one obtains the equality
\begin{equation}\label{ddet}
d \det(A) = \displaystyle \sum_{i,j=1}^{n} C_{ij}da_{ij}=\tr(C^{T}dA)
\end{equation}
\noindent which is well-known. In the case $A$ is invertible, dividing (\ref{ddet}) by $\det A$, we have the Jacobi's formula 
\begin{equation}\label{jacobi}
d\log \det A=\tr \left(A^{-1}dA\right).
\end{equation}

Denoting the cofactor matrix for the linear pencil $A(z)$ by $C_A(z)$, we have the following description of the polynomials $q_1, ..., q_k$ in the expansion (\ref{Q_A}).
\begin{prop}
For $1\leq m \leq k-1$, we have 
\[q_{k-m}(z')=\frac{1}{m!}\Bigg(\frac{\partial^{m-1}}{\partial z_0^{m-1}}\tr C_A(z)\Bigg)\Bigg|_{z_0=0}.\]
\end{prop}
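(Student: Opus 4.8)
The plan is to differentiate $Q_A$ with respect to $z_0$ in two different ways and compare. On one hand, writing the pencil as $A(z)=z_0I+A_*(z')$ we have $\partial A(z)/\partial z_0=I$, so the chain rule together with the identity $\partial\det A/\partial a_{ij}=C_{ij}$ established above (equivalently, formula (\ref{ddet}) with $dA=I\,dz_0$) yields
\[\frac{\partial Q_A}{\partial z_0}=\sum_{i,j}\big(C_A(z)\big)_{ij}\,\frac{\partial \big(A(z)\big)_{ij}}{\partial z_0}=\sum_{i=1}^k \big(C_A(z)\big)_{ii}=\tr C_A(z).\]
On the other hand, differentiating the expansion (\ref{Q_A}) term by term and re-indexing via $j=k-m$ gives
\[\frac{\partial Q_A}{\partial z_0}=\sum_{m=0}^{k-1}(k-m)\,z_0^{\,k-m-1}\,q_m(z')=\sum_{j=1}^{k}j\,z_0^{\,j-1}\,q_{k-j}(z').\]

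Comparing the two displays, $\tr C_A(z)$ is, as a polynomial in the single variable $z_0$ (with $z'$ held fixed), equal to $\sum_{j=1}^{k}j\,z_0^{\,j-1}q_{k-j}(z')$. Hence its coefficient of $z_0^{\,m-1}$ is $m\,q_{k-m}(z')$ for $1\le m\le k-1$ (and, in fact, for $m=k$ too). By the one-variable Taylor formula this coefficient equals $\frac{1}{(m-1)!}\big(\partial_{z_0}^{m-1}\tr C_A(z)\big)\big|_{z_0=0}$, and since $\frac1m\cdot\frac1{(m-1)!}=\frac1{m!}$, solving for $q_{k-m}(z')$ produces exactly the claimed formula.

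There is no substantive obstacle here: the argument is one chain-rule computation, one term-by-term differentiation, and the extraction of a Taylor coefficient. The only points that warrant a line of care are (i) recognizing that contracting the cofactor matrix against the identity produces the \emph{trace} of $C_A(z)$ — immediate from $\partial\det A/\partial a_{ij}=C_{ij}$, or from (\ref{ddet}) via the transpose-invariance of $\tr$ — and (ii) keeping the index shift $m\mapsto k-m$ and the split $\frac1{m!}=\frac1m\cdot\frac1{(m-1)!}$ straight; the restriction $m\le k-1$ (together with $q_0=1$) just reflects that the $z_0$-free term $q_k$ is annihilated by $\partial_{z_0}$ and so is not recovered this way.
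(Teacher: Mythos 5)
Your proposal is correct and follows essentially the same route as the paper: both derive $\partial Q_A/\partial z_0=\tr C_A(z)$ from the cofactor differentiation formula (\ref{ddet}) applied to the pencil, and then recover $q_{k-m}$ by taking $m-1$ further $z_0$-derivatives and evaluating at $z_0=0$. The only difference is that you spell out the term-by-term differentiation of (\ref{Q_A}) and the Taylor-coefficient bookkeeping that the paper leaves implicit.
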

\begin{proof}
In (\ref{ddet}), we substitute the general matrix $A$ by the linear pencil $A(z)$ to obtain \[d Q_A(z) =\tr\left(C_A^{T}(z)(dz_0+A_1dz_1+\cdots +A_ndz_n)\right).\]
It follows that
\begin{equation}\label{dmdet}
\frac{\partial Q_A(z)}{\partial z_0}=\tr C_A^{T}(z)=\tr C_A(z).
\end{equation}
Setting $z_0=0$, we obtain $q_{k-1}(z')=\tr C_A(z) \big|_{z_0=0}$. In view of the expansion (\ref{Q_A}), the formula for $q_{k-m}(z')$ can be obtained by taking the $(m-1)$th partial derivative of (\ref{dmdet}) with respect to $z_0$ and then setting $z_0=0$.
\end{proof}
\noindent Therefore, the polynomial $\tr C_A(z)$ is a generating function for the polynomials $q_j$.

%Using the polynomial $q_m$, we will be able to obtain our final theorem involving $n$-tuples of projections onto hyperplanes.

\section{Two Projections}

Projections are used very widely in linear algebra, operator theory, and operator algebras. Besides being an interesting subject in its own right, they often serve as a test ground for researchers to gain intuitions and insights for various other purposes. It is thus necessary to study the characteristic polynomial for projection matrices. This section proves the main theorem of the  paper. 

\subsection{Unitary Equivalence}
We begin with the following definition.
    \begin{definition}
    Two tuples of matrices $(A_1,..., A_n)$ and $(B_1,..., B_n)$ are \textit{unitarily equivalent} if there exists a unitary matrix $U$ such that $UA_i=B_iU$ for all $i$.
    \end{definition}

The problem of finding a complete set of numerical unitary invariants for a single square matrix was solved by Specht \cite{Sp} in terms of trace on the {\em words} in $A$ and $A^*$. For tuples, it was solved similarly by Procesi \cite[Theorem 7.1]{Pro}. 
\begin{thm}\label{CD}
Two $n$-tuples $A$ and $A'$ of matrices are unitarily equivalent if and only if the trace of any finite word in $A_1, ..., A_n, A_1^*, ..., A_n^*$ agrees with that of the corresponding word in $A'_1, ..., A'_n, {A'_1}^*, ..., {A'_n}^*$.
\end{thm}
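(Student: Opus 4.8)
The forward direction is immediate: if $U$ is unitary with $UA_i=A_i'U$ for all $i$, then also $UA_i^*U^{-1}=(UA_iU^{-1})^*={A_i'}^*$, so $U$ conjugates every word in $A_1,\dots,A_n,A_1^*,\dots,A_n^*$ to the corresponding word in the primed tuple, and trace is conjugation invariant. The content is the converse, and the plan is to recast it as a statement about representations of a free algebra and then to upgrade an invertible intertwiner to a unitary one by a polar-decomposition argument.

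First I would regard the two tuples as representations $\rho,\rho'\colon \C\langle x_1,\dots,x_n,y_1,\dots,y_n\rangle\to M_k(\C)$ of the free algebra on $2n$ generators, determined by $x_i\mapsto A_i$, $y_i\mapsto A_i^*$ and by $x_i\mapsto A_i'$, $y_i\mapsto {A_i'}^*$ respectively. The hypothesis says $\rho$ and $\rho'$ have equal characters, i.e. $\tr\rho(w)=\tr\rho'(w)$ on all words $w$. The key point is that each of $\rho,\rho'$ is \emph{semisimple}: the image of $\rho$ is the $*$-subalgebra of $M_k(\C)$ generated by $A_1,\dots,A_n,A_1^*,\dots,A_n^*$, which is a finite-dimensional $C^*$-algebra, and a $*$-representation of such an algebra on a finite-dimensional Hilbert space is completely reducible; likewise for $\rho'$. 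Writing $\rho\cong\bigoplus_j V_j^{a_j}$ and $\rho'\cong\bigoplus_j V_j^{b_j}$ over a common finite list of pairwise non-isomorphic simple modules $V_j$, linear independence of the characters $\chi_{V_j}$ (a standard consequence of the Jacobson density theorem in characteristic $0$) forces $a_j=b_j$, hence $\rho\cong\rho'$. Concretely this produces an invertible $S\in M_k(\C)$ with $SA_iS^{-1}=A_i'$ \emph{and} $SA_i^*S^{-1}={A_i'}^*$ for every $i$.

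It then remains to make $S$ unitary. Taking adjoints of $SA_iS^{-1}=A_i'$ gives $(S^*)^{-1}A_i^*S^*={A_i'}^*$, which compared with $SA_i^*S^{-1}={A_i'}^*$ yields $(S^*S)A_i^*=A_i^*(S^*S)$; the same manipulation applied to the relation for $A_i^*$ shows that $S^*S$ commutes with each $A_i$ as well. Hence the positive definite matrix $P:=S^*S$ commutes with all of $A_1,\dots,A_n,A_1^*,\dots,A_n^*$, and therefore so does $P^{1/2}$. Setting $U:=SP^{-1/2}$ gives $U^*U=P^{-1/2}(S^*S)P^{-1/2}=I$, so $U$ is unitary, while $UA_iU^{-1}=SP^{-1/2}A_iP^{1/2}S^{-1}=SA_iS^{-1}=A_i'$, completing the argument.

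I expect the main obstacle to be the inference from equal characters to isomorphic representations: it is false in general --- a nilpotent Jordan block and the zero matrix of the same size share all word-traces but are not similar --- so the proof must genuinely exploit that the generating set is closed under adjoint, which is precisely what makes $\rho$ and $\rho'$ semisimple. Once semisimplicity is in hand, both the character comparison and the polar-decomposition step are routine.
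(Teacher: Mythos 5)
The paper does not actually prove Theorem \ref{CD}: it is quoted as a known result (Specht for a single matrix, Procesi for tuples), with the reader referred to Cowen--Douglas for a proof. So there is no in-paper argument to compare against; what can be said is that your proof is correct, complete, and follows the classical route. The two nontrivial points are both handled properly: (i) semisimplicity of the module $\C^k$ over the free algebra, which holds because the image algebra is $*$-closed in $M_k(\C)$ and a finite-dimensional $*$-closed subalgebra is semisimple (this is exactly where closure of the generating set under adjoints enters, as your Jordan-block example correctly shows it must); combined with linear independence of characters of non-isomorphic finite-dimensional simples in characteristic $0$, this yields an invertible intertwiner $S$ respecting both $A_i$ and $A_i^*$; and (ii) the polar-decomposition upgrade, where the computation that $S^*S$ commutes with all of $A_1,\dots,A_n,A_1^*,\dots,A_n^*$ (hence so does $(S^*S)^{\pm 1/2}$, being a limit of polynomials in $S^*S$) and that $U=S(S^*S)^{-1/2}$ is unitary and still intertwines, is exactly right. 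This is essentially the argument one finds in the cited sources, so nothing is gained or lost relative to the paper; if anything, your write-up makes explicit the one hypothesis (closure under adjoints) that the paper's citation leaves implicit.
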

\noindent An elegant proof of the theorem can be found in Cowen-Douglas \cite{CD}. A complex square matrix $p$ is a \textit{projection} if $p^{*}=p$ and $p^2=p$. For pairs of projection matrices $(p_1, p_2)$, the property of trace reduces the above theorem to the following form.
\begin{lem}\label{2tr}
 Two pairs of projection matrices $(p_1,p_2)$ and $(q_1,q_2)$ are unitarily equivalent if and only if $\tr(p_1)=\tr(q_1)$, $\tr(p_2)=\tr(q_2)$, and $\tr[(p_1p_2)^j]=\tr[(q_1q_2)^j]$ for all $j\geq 1$. 
\end{lem}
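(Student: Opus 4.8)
The plan is to deduce Lemma \ref{2tr} from Procesi's criterion (Theorem \ref{CD}) by showing that, for a pair of projections, the full list of word-traces in $p_1,p_2,p_1^*,p_2^*$ collapses to the short list $\tr(p_1)$, $\tr(p_2)$, and $\{\tr[(p_1p_2)^j]\}_{j\geq 1}$. Since $p_i^*=p_i$ and $p_i^2=p_i$, every word in the four letters $p_1,p_2,p_1^*,p_2^*$ reduces to an alternating word in $p_1,p_2$, i.e.\ to one of $(p_1p_2)^j$, $(p_2p_1)^j$, $(p_1p_2)^jp_1$, $(p_2p_1)^jp_2$, together with the degenerate cases $p_1$, $p_2$, and $I$ (the empty word). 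So the only traces that can occur are $\tr I=k$, $\tr p_1$, $\tr p_2$, $\tr[(p_1p_2)^j]$, $\tr[(p_2p_1)^j]$, $\tr[(p_1p_2)^jp_1]$, and $\tr[(p_2p_1)^jp_2]$.

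Next I would eliminate the redundant entries in this list. By the cyclic property of trace, $\tr[(p_2p_1)^j]=\tr[(p_1p_2)^j]$, so the ``reversed'' alternating words contribute nothing new. For the odd-length words, again by cyclicity and then $p_1^2=p_1$,
\[
\tr[(p_1p_2)^jp_1]=\tr[p_1(p_1p_2)^j]=\tr[p_1^2(p_2p_1)^{j-1}p_2]=\tr[(p_1p_2)^jp_1]?
\]
more cleanly: $\tr[(p_1p_2)^jp_1]=\tr[(p_1p_2p_1)\cdot(p_2p_1)^{j-1}]$, and using $p_1p_2p_1$ together with cyclicity one checks $\tr[(p_1p_2)^jp_1]=\tr[(p_1p_2)^{j}]$ is \emph{not} generally true; instead the correct reduction is $\tr[(p_1p_2)^jp_1]=\tr[p_1(p_1p_2)^j]=\tr[(p_1p_2)^{j}]$ after absorbing one $p_1$ via $p_1^2=p_1$ from the trailing $\ldots p_2p_1$ block. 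I will write this absorption out carefully: moving the final $p_1$ cyclically to the front merges with the leading $p_1$ of the next copy to give $p_1^2=p_1$, so $\tr[(p_1p_2)^jp_1]=\tr[(p_1p_2)^j]$, and symmetrically $\tr[(p_2p_1)^jp_2]=\tr[(p_2p_1)^j]=\tr[(p_1p_2)^j]$. Hence the only independent word-traces are $\tr I$, $\tr p_1$, $\tr p_2$, and $\tr[(p_1p_2)^j]$ for $j\geq1$. Since both tuples are assumed to act on spaces of the same size (we may reduce to that case, as equality of $\tr p_1$ forces nothing about $k$ — but note $\tr I$ being matched is automatic once the matrix sizes agree, which is part of the standing assumption on tuples), Theorem \ref{CD} then gives unitary equivalence from exactly the stated data, and the converse direction is trivial since unitary equivalence preserves all traces.

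The step I expect to require the most care is the bookkeeping for the odd-length alternating words: getting the cyclic-shift-plus-idempotent absorption right so that $\tr[(p_1p_2)^jp_1]$ genuinely reduces to $\tr[(p_1p_2)^j]$ rather than to some new quantity. A clean way to organize this is to observe that any word $w$ in $p_1,p_2$ is, up to cyclic permutation, of the form $p_{i_1}p_{i_2}\cdots p_{i_r}$ with no two adjacent (cyclically) indices equal — because any adjacent repetition collapses via $p^2=p$ — and such a cyclically-reduced alternating word in two letters must have even length, so it is a cyclic shift of $(p_1p_2)^j$; the trace is then $\tr[(p_1p_2)^j]$ by cyclicity. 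That single observation handles all cases uniformly and is the cleanest route; everything else is immediate.
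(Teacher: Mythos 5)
Your proposal is correct and is exactly the reduction the paper has in mind: the paper states Lemma \ref{2tr} as an immediate consequence of Theorem \ref{CD} (``the property of trace reduces the above theorem to the following form'') without writing out the details, and your argument --- that $p_i^*=p_i$ and $p_i^2=p_i$ collapse every word to a cyclically alternating one, whose trace is $\tr I$, $\tr p_1$, $\tr p_2$, or $\tr[(p_1p_2)^j]$ after cyclic absorption of the odd-length cases --- supplies precisely those details. The cleaner uniform observation in your final paragraph is the right way to write it; the tentative displayed chain with the question mark in the middle should simply be deleted in favor of it.
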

\noindent Note that if the projections are of size $k\times k$, then due to the Cayley-Hamilton theorem $(p_1p_2)^k$ can be expressed as a linear combination of lower degree terms. Thus, one only needs to check $\tr[(p_1p_2)^j]=\tr[(q_1q_2)^j]$ for $j\leq k-1$. Pairs of projections have been well-studied in the literature. For an extensive treatise on this subject, we refer the readers to \cite{BS}. The main result of this paper is as follows.
\begin{thm}\label{main}
Two pairs of projection matrices $p=(p_1, p_2)$ and $q=(q_1, q_2)$ are unitarily equivalent if and only if $Q_p=Q_q$.
\end{thm}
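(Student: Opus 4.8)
The plan is to prove the nontrivial (``if'') direction by passing to Halmos's canonical form for a pair of projections and then reading off all of its numerical parameters from the factorization of $Q_p$ in the unique factorization domain $\C[z_0,z_1,z_2]$. The ``only if'' direction is immediate: if $U$ is unitary with $Up_i=q_iU$, then $q_i=Up_iU^*$, so $z_0I+z_1q_1+z_2q_2=U(z_0I+z_1p_1+z_2p_2)U^*$, and taking determinants gives $Q_q=Q_p$.

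First I would invoke the canonical form for a pair of projections (see \cite{BS}): after conjugating by a unitary, the ambient space splits as an orthogonal direct sum of subspaces reducing both $p_1$ and $p_2$, on which the pair acts either (i) one-dimensionally as one of the four pairs $(0,0),(1,0),(0,1),(1,1)$, or (ii) two-dimensionally with $p_1=\mathrm{diag}(1,0)$ and $p_2$ the rank-one orthogonal projection onto the line spanned by $(\cos\theta,\sin\theta)$ for some angle $\theta\in(0,\pi/2)$. Consequently the unitary equivalence class of $(p_1,p_2)$ is a complete invariant of the four multiplicities $m_{00},m_{10},m_{01},m_{11}$ of the one-dimensional pieces together with the multiset $\Theta_p$ of angles occurring among the two-dimensional pieces. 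A single $2\times2$ determinant computation shows that a two-dimensional piece with angle $\theta$ contributes to $Q_p$ the quadratic factor
\[
f_t(z)=z_0^2+z_0z_1+z_0z_2+t\,z_1z_2=z_0(z_0+z_1+z_2)+t\,z_1z_2,\qquad t=\sin^2\theta\in(0,1),
\]
so that
\[
Q_p(z)=z_0^{\,m_{00}}(z_0+z_1)^{m_{10}}(z_0+z_2)^{m_{01}}(z_0+z_1+z_2)^{m_{11}}\prod_{\theta\in\Theta_p}f_{\sin^2\theta}(z).
\]

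Next I would verify that this is, in essence, the factorization of $Q_p$ into irreducibles. The four linear forms $z_0,\ z_0+z_1,\ z_0+z_2,\ z_0+z_1+z_2$ are pairwise non-proportional; for $t\in(0,1)$ the polynomial $f_t$ is irreducible in $\C[z_0,z_1,z_2]$ (equating $f_t$ to a product of two linear forms normalized to have leading term $z_0$ forces the coefficient of $z_1z_2$ to be $0$ or $1$, contradicting $t\in(0,1)$), no $f_t$ is proportional to a linear form, and $f_t$ is proportional to $f_{t'}$ only when $t=t'$. Since $\theta\mapsto\sin^2\theta$ is injective on $(0,\pi/2)$, unique factorization in $\C[z_0,z_1,z_2]$ then recovers from $Q_p$ the exponent of every irreducible factor, i.e.\ all of $m_{00},m_{10},m_{01},m_{11}$ and the full multiset $\Theta_p$. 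Hence $Q_p=Q_q$ forces these data to agree for the two pairs, and by the canonical form $(p_1,p_2)$ and $(q_1,q_2)$ are unitarily equivalent. The one point that genuinely needs care is that the degenerate values $t=0$ and $t=1$ correspond to $f_0=z_0(z_0+z_1+z_2)$ and $f_1=(z_0+z_1)(z_0+z_2)$ — exactly the situations Halmos's theorem already records as one-dimensional pieces — so the restriction $\theta\in(0,\pi/2)$ is precisely what keeps the two bookkeeping lists disjoint and the quadratic factors irreducible; otherwise there is no serious obstacle, the work being the explicit evaluation of $Q_p$ on the canonical form and the elementary irreducibility check.

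As an alternative closer to Lemma \ref{2tr}, one could instead combine Proposition \ref{eigen} at $z_1=z_2=1$ with Lemma \ref{q1,q2}: the spectrum of $p_1+p_2$ returns $m_{00}$, $m_{11}$, $m_{10}+m_{01}$, and the multiset $\{1\pm\cos\theta:\theta\in\Theta_p\}$, while $\tr p_1$ and $\tr p_2$ (the $z_1$- and $z_2$-coefficients of $q_1$) separate $m_{10}$ from $m_{01}$; one then checks that $\tr[(p_1p_2)^j]=m_{11}+\sum_{\theta\in\Theta_p}(\cos^2\theta)^j$ must coincide for the two pairs, so Lemma \ref{2tr} applies. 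I would present the factorization argument above as the main proof, since it sidesteps the combinatorics of extracting $\tr[(p_1p_2)^j]$ directly from the coefficients of $Q_p$.
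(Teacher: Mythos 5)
Your argument is correct, but it is not the route the paper takes for its primary proof. The paper proves the nontrivial direction by reducing unitary equivalence of a pair of projections to trace data via Specht/Procesi (Lemma \ref{2tr}: equality of $\tr p_i$ and of $\tr[(p_1p_2)^j]$ for all $j$), and then extracts $\tr[(p_1p_2)^j]$ from $Q_p$ by an induction on the fully expanded form of $\tr[(z_1p_1+z_2p_2)^{2n}]$, using the fact that $(p_1p_2)^n z_1^nz_2^n$ is the unique longest word appearing there. Your proof instead goes through Halmos's two-subspaces canonical form, computes $Q_p$ explicitly on each reducing block, and recovers the complete invariants (the four multiplicities and the multiset of angles) from unique factorization in $\C[z_0,z_1,z_2]$; your $2\times 2$ determinant, the irreducibility check for $f_t$ with $t\in(0,1)$, and the observation that $t=0,1$ degenerate into the linear factors are all right, and this matches the paper's formula $z_0^2+z_0(z_1+z_2)+z_1z_2x$, $x\in\hat{\sigma}(I-H)$, in Theorem \ref{CPP}. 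In fact the paper records essentially your argument as an \emph{alternative} proof at the end of Subsection 2.2 (added at the referee's suggestion), so you have independently found that second proof. The trade-off: the paper's main proof is combinatorial and leans on Specht's theorem but needs no structure theory, while your proof leans on Halmos's theorem and in exchange produces the explicit factorization of $Q_p$ into irreducibles (the content of Theorem \ref{CPP} and its corollaries) as a byproduct, which the trace argument does not give.
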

\begin{proof}
The sufficiency is trivial. For the necessity, we let $A:=z_1p_1+z_2p_2$ and $B:=z_1q_1+z_2q_2$. The first two conditions of Lemma \ref{2tr} follow readily from Lemma \ref{q1,q2}. In fact, since $\tr(p_1)z_1+\tr(p_2)z_2=\tr(q_1)z_1+\tr(q_2)z_2$ for all complex numbers $z_1$ and $z_2$, we must have $\tr(p_i)=\tr(q_i), i=1, 2$. To obtain the third condition and thus the result of unitary equivalence, we will make three claims. Each of these claims may be easily verified by induction, though the notation is rather tedious for the first two and will therefore be excluded from this paper. 
\begin{claim}
All terms in the fully expanded form of $A^{2n}$ reduce to the form $p_iz_1^sz_2^t$, $p_ip_jp_ip_j\cdots p_ip_jz_1^sz_2^t$, or $p_ip_jp_ip_j\cdots p_iz_1^sz_2^t$, where $i,j\in\{1,2\}$, $i\neq j$, $s,t\in\{0,...,2n\}$, and $s+t=2n$.
\end{claim}
\noindent Taking the trace, we see that all coefficients in the fully expanded form of $\tr(A^{2n})$ reduce to $\tr(p_1)$, $\tr(p_2)$, or $\tr[(p_1p_2)^r]$ for some $r$.
\begin{claim}
    $(p_1p_2)^nz_1^nz_2^n$ is the longest word that appears in the fully expanded form of $A^{2n}$.
\end{claim}
\noindent Taking the trace, we see $\tr[(p_1p_2)^n]$ must appear in $\tr(A^{2n})$ as a coefficient.
\begin{claim}
    $\tr[(p_1p_2)^j]=\tr[(q_1q_2)^j]$, for all $j\geq 1.$
\end{claim}
\noindent To prove Claim 3, for the base case of $n=1$, consider $\tr(A^{2})=\tr(B^{2})$. Then \[\tr(p_1)z_1^2+2\tr(p_1p_2)z_1z_2+\tr(p_2)z_2^2=\tr(q_1)z_1^2+2\tr(q_1q_2)z_1z_2+\tr(q_2)z_2^2.\] It follows that $\tr(p_1p_2)=\tr(q_1q_2)$. Now, assume the hypothesis holds for $k\le n$ and consider the fully expanded form of $\tr(A^{2(n+1)})=\tr(B^{2(n+1)})$. From Claim 1, we see that all coefficients must be of the form $\tr(p_1)$, $\tr(p_2)$, or $\tr(p_1p_2)^r$, and from Claim 2 we see that $r\le n+1$. Moreover, Claim 2 indicates that $\tr(p_1p_2)^{n+1}$ must appear as a coefficient at least once. Hence, by factoring like $z^sz^t$ terms and noting that $\tr(p_1)=\tr(q_1)$, $\tr(p_2)=\tr(q_2)$, and $\tr[(p_1p_2)^r]=\tr[(q_1q_2)^r]$, $\forall r\leq n$ by the induction assumption, we must have that $\tr[(p_1p_2)^{n+1}]=\tr[(q_1q_2)^{n+1}]$. This proves Claim 3, and the theorem follows from Lemma \ref{2tr}.
\end{proof}

With Theorem \ref{main} at hand, we realize yet another property of the characteristic polynomial: it is a complete invariant for pairs of projections. The symmetry of $Q_p$ thus has the following consequence.
\begin{corr}
Let $p=(p_1, p_2)$ be a pair of projection matrices. If $Q_p$ is symmetric with respect to $z_1$ and $z_2$, then $p_1$ and $p_2$ have the same rank.
\end{corr}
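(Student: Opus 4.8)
The plan is to read off the coefficient of the term $z_1 z_2^{k-1}$ (equivalently $z_1^{k-1}z_2$) from the expansion of $Q_p$ and show that, up to sign, it equals $\tr(p_1)$ (resp. $\tr(p_2)$); then the hypothesized symmetry of $Q_p$ forces $\tr(p_1)=\tr(p_2)$, and since a projection has rank equal to its trace, the conclusion follows.

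First I would fix notation: write $Q_p(z_0,z_1,z_2)=\det(z_0 I + z_1 p_1 + z_2 p_2)$ for $k\times k$ projections, and consider the specialization $z_2=0$. Then $Q_p(z_0,z_1,0)=\det(z_0 I + z_1 p_1)$, and since $p_1$ is a projection of rank $r_1:=\tr(p_1)$, its eigenvalues are $1$ with multiplicity $r_1$ and $0$ with multiplicity $k-r_1$, so $\det(z_0I+z_1p_1)=(z_0+z_1)^{r_1}z_0^{\,k-r_1}$. Expanding, the coefficient of $z_1 z_0^{k-1}$ in $Q_p(z_0,z_1,0)$ is exactly $r_1$. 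Symmetrically, setting $z_1=0$ gives $Q_p(z_0,0,z_2)=(z_0+z_2)^{r_2}z_0^{\,k-r_2}$ with $r_2:=\tr(p_2)$, so the coefficient of $z_2 z_0^{k-1}$ is $r_2$. Here I am invoking no more than Proposition \ref{eigen}'s underlying fact that $Q_p$ is the classical characteristic polynomial of the pencil, together with the elementary spectral description of a projection.

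Next, since $Q_p$ is by hypothesis symmetric under interchanging $z_1$ and $z_2$, the monomial $z_1 z_0^{k-1}$ and the monomial $z_2 z_0^{k-1}$ must occur with the same coefficient in the full expansion of $Q_p(z_0,z_1,z_2)$. But the coefficient of $z_1 z_0^{k-1}$ is unaffected by setting $z_2=0$ (no positive power of $z_2$ is present in that monomial), so it coincides with the coefficient computed above, namely $r_1$; likewise the coefficient of $z_2 z_0^{k-1}$ equals $r_2$. Hence $r_1=r_2$, i.e.\ $\tr(p_1)=\tr(p_2)$. Finally, the rank of a projection matrix equals its trace (the nonzero eigenvalues are all $1$), so $\operatorname{rank}(p_1)=\operatorname{rank}(p_2)$, as claimed.

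I do not anticipate a serious obstacle here; the only point requiring a word of care is the bookkeeping claim that the coefficient of $z_1 z_0^{k-1}$ in $Q_p(z_0,z_1,z_2)$ is genuinely recovered by the substitution $z_2=0$ — this is immediate because that monomial has $z_2$-degree zero, so no cancellation from higher-degree-in-$z_2$ terms can interfere. Alternatively, one could phrase the whole argument through Lemma \ref{q1,q2}(a): the degree-one part in $(z_1,z_2)$ of $Q_p$, namely $z_0^{k-1}q_1(z')=z_0^{k-1}(z_1\tr p_1 + z_2\tr p_2)$, is symmetric in $z_1,z_2$ precisely when $\tr p_1=\tr p_2$, which is perhaps the cleanest route and avoids even the projection-specific computation.
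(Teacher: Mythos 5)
Your proof is correct, but it takes a genuinely different and more elementary route than the paper. The paper's argument is a one-liner that leans on the full strength of Theorem \ref{main}: setting $q=(p_2,p_1)$, the symmetry hypothesis gives $Q_p=Q_q$, so the main theorem makes $(p_1,p_2)$ and $(p_2,p_1)$ unitarily equivalent as a pair; in particular $p_1$ and $p_2$ are unitarily equivalent matrices and hence have the same rank. You instead extract only the coefficient of $z_0^{k-1}$ --- equivalently, you use Lemma \ref{q1,q2}(a), which says this coefficient is $q_1(z')=z_1\tr p_1+z_2\tr p_2$ --- and observe that symmetry of $Q_p$ in $z_1,z_2$ forces each $q_m$, in particular $q_1$, to be symmetric, whence $\tr p_1=\tr p_2$ and the ranks agree since a projection's rank equals its trace. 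Your bookkeeping about recovering the coefficient of $z_1z_0^{k-1}$ by setting $z_2=0$ is sound. What each approach buys: yours is self-contained, needs only the first trace coefficient, and would apply verbatim to $n$-tuples of projections (pairwise equality of traces under any transposition symmetry); the paper's approach is shorter on the page and actually proves more, namely that the pair $(p_1,p_2)$ is unitarily equivalent to $(p_2,p_1)$, not merely that $p_1$ and $p_2$ share a rank --- which is presumably why the authors present the corollary as a showcase for Theorem \ref{main}.
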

\begin{proof}
Let $q=(p_2, p_1)$. Then the assumption implies that $Q_p=Q_q$, and hence the corollary follows from Theorem \ref{main}.
\end{proof} 

Now, let us consider the following application. Let $r=2p-I$, where $p$ is a projection. A simple computation will show us that $r$ is an involution, i.e., $r^2=I$. On the other hand, given an involution $r$, the matrix $p=\frac{r+I}{2}$ is a projection. This fact gives the following corollary.
\begin{corr}
Two tuples of involutions $(r_1,r_2)$ and $(s_1,s_2)$ are unitarily equivalent if their characteristic polynomials coincide.
\end{corr}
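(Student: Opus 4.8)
The plan is to reduce the statement to Theorem \ref{main} by passing from involutions to projections through the affine substitution $p=\frac{r+I}{2}$ already recorded in the paragraph preceding the corollary. First I would set $p_i:=\tfrac12(r_i+I)$ and $q_i:=\tfrac12(s_i+I)$ for $i=1,2$; these are projection matrices, and conversely $r_i=2p_i-I$ and $s_i=2q_i-I$.

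Next I would express the linear pencil of the involutions in terms of that of the projections. Substituting $r_i=2p_i-I$ gives
\[
z_0I+z_1r_1+z_2r_2=(z_0-z_1-z_2)I+(2z_1)p_1+(2z_2)p_2,
\]
so that $Q_r(z_0,z_1,z_2)=Q_p(z_0-z_1-z_2,\,2z_1,\,2z_2)$, and likewise $Q_s(z_0,z_1,z_2)=Q_q(z_0-z_1-z_2,\,2z_1,\,2z_2)$. The map $T\colon(z_0,z_1,z_2)\mapsto(z_0-z_1-z_2,\,2z_1,\,2z_2)$ is an invertible linear transformation of $\C^3$, with inverse $(w_0,w_1,w_2)\mapsto(w_0+\tfrac12 w_1+\tfrac12 w_2,\,\tfrac12 w_1,\,\tfrac12 w_2)$. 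Since $Q_r=Q_p\circ T$ and $Q_s=Q_q\circ T$ are genuine polynomial identities on $\C^3$, composing with the automorphism $T$ preserves and reflects equality of polynomials; hence the hypothesis $Q_r=Q_s$ forces $Q_p=Q_q$.

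Now I would invoke Theorem \ref{main}: from $Q_p=Q_q$ the pairs of projections $(p_1,p_2)$ and $(q_1,q_2)$ are unitarily equivalent, so there is a unitary $U$ with $Up_i=q_iU$ for $i=1,2$. Multiplying through then yields $Ur_i=U(2p_i-I)=2q_iU-U=(2q_i-I)U=s_iU$, so the same $U$ implements a unitary equivalence of $(r_1,r_2)$ and $(s_1,s_2)$, which completes the argument.

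I do not expect a genuine obstacle here; the only point needing a word of care is that $Q_r$ and $Q_p$ are honest polynomial functions on $\C^3$, which makes the change of variables by the linear automorphism $T$ legitimate. (One could alternatively argue straight from Lemma \ref{2tr}, checking that $\tr(r_i)$ and $\tr[(r_1r_2)^j]$ are each determined by the corresponding traces of the $p_i$ and conversely, but the change-of-variables route is cleaner and avoids the bookkeeping.)
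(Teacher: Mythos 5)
Your proposal is correct and is exactly the argument the paper intends: the paper simply notes the correspondence $r=2p-I$, $p=\frac{r+I}{2}$ and asserts the corollary, while you supply the (routine) details of the invertible linear change of variables on the pencil and the transfer of the unitary. The only point worth noting, inherited from the paper's own framing, is that the involutions must be self-adjoint for $\frac{r+I}{2}$ to be a projection in the paper's sense ($p^*=p$), which is implicit in the surrounding discussion.
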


\subsection{The Structure of Two Projections}

In this subsection, we inherit from Halmos \cite{Hal} the notations $P$ and $Q$ for two projections on a Hilbert space $\mathcal{H}$. Define $L=\ran P$ and $N=\ran Q$. Then we may decompose $\mathcal{H}$ in the following way: \[L=(L\cap N)\oplus (L\cap N^\bot)\oplus M_0 \ \ \textup{and} \ \ L^\bot =(L^\bot\cap N)\oplus(L^\bot \cap N^\bot)\oplus M_1,\] where $M_0$ and $M_1$ are closed subspaces of $L$ and $L^\bot$, respectively. Therefore,
\begin{equation}\label{decom}
\mathcal{H}=(L\cap N)\oplus (L\cap N^\bot)\oplus(L^\bot\cap N)\oplus(L^\bot \cap N^\bot)\oplus M_0\oplus M_1.
\end{equation} Note that if $M_0=M_1=\{0\}$, we have the decomposition \[\mathcal{H}=(L\cap N)\oplus (L\cap N^\bot)\oplus(L^\bot\cap N)\oplus(L^\bot \cap N^\bot),\] and $P=(1,1,0,0)$, $Q=(1,0,1,0)$, where $(\alpha_{00},\alpha_{01},\alpha_{10},\alpha_{11})$ stands for \[\alpha_{00}I_{(L\cap N)}\oplus\alpha_{01}I_{(L\cap N^\bot)}\oplus\alpha_{10}I_{(L^\bot\cap N)}\oplus\alpha_{11}I_{(L^\bot \cap N^\bot)}.\] 
With respect to the decomposition (\ref{decom}), we have the following theorem due to Halmos \cite{Hal,BS}.
\begin{thm}\label{Halmos}
    It holds that $\dim M_0=\dim M_1$, and if the dimension is nonzero, then up to unitary equivalence, 
    \[P=(1,1,0,0)\oplus\begin{pmatrix}
I & 0 \\
0 & 0 
\end{pmatrix},  \ \  Q=(1,0,1,0)\oplus\begin{pmatrix}
H & \sqrt{H(1-H)} \\
\sqrt{H(1-H)} & H 
\end{pmatrix},\]
where the operator $H:=PQP\mid_{M_0}$ is a positive, self-adjoint contraction such that $0\leq H\leq I$ and $\ker H=\ker (I-H)=\{0\}$.
\end{thm}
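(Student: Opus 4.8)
The plan is to run the classical Halmos argument: strip off the part of $\mathcal H$ on which $P$ and $Q$ act as scalars, and then put the remaining ``generic'' part into normal form by means of a polar decomposition. First I would observe that each of the four subspaces $L\cap N$, $L\cap N^{\perp}$, $L^{\perp}\cap N$, $L^{\perp}\cap N^{\perp}$ consists of common eigenvectors of $P$ and $Q$ (a vector in $L$ is fixed by $P$, one in $L^{\perp}$ is annihilated by $P$, and likewise for $Q$ on $N$ and $N^{\perp}$), so their direct sum reduces both $P$ and $Q$, where the pair is $(1,1,0,0)$, $(1,0,1,0)$. Hence its orthogonal complement $M:=M_0\oplus M_1$ also reduces $P$ and $Q$; since $M_0\subseteq L$ and $M_1\subseteq L^{\perp}$, the restriction $P|_M$ is the orthogonal projection of $M$ onto $M_0$, that is $\begin{pmatrix} I & 0\\ 0 & 0\end{pmatrix}$ with respect to $M=M_0\oplus M_1$. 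Everything then reduces to analyzing the projection $Q|_M$ and to proving $\dim M_0=\dim M_1$.

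Next I would record the basic properties of $H:=PQP|_{M_0}$. For $x\in M_0\subseteq L$ one has $\langle PQPx,x\rangle=\langle Qx,x\rangle=\|Qx\|^{2}$, so $0\le H\le I$; if $Hx=0$ then $Qx=0$, so $x\in L\cap N^{\perp}$, and if $Hx=x$ then $\|Qx\|=\|x\|$, which forces $Qx=x$ and $x\in L\cap N$. As $M_0$ is orthogonal to both $L\cap N$ and $L\cap N^{\perp}$, this yields $\ker H=\ker(I-H)=\{0\}$; the identical computation for the corner $Z:=(I-P)Q(I-P)|_{M_1}$ (using that $M_1$ is orthogonal to $L^{\perp}\cap N$ and $L^{\perp}\cap N^{\perp}$) gives $\ker Z=\ker(I-Z)=\{0\}$. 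Writing $Q|_M=\begin{pmatrix} H & Y\\ Y^{*} & Z\end{pmatrix}$ relative to $M_0\oplus M_1$ and expanding $Q^{2}=Q$ in blocks produces $YY^{*}=H(I-H)$, $Y^{*}Y=Z(I-Z)$, and the intertwining identity $HY+YZ=Y$. Since $H$ and $I-H$ are commuting, injective, positive operators, $YY^{*}$ is injective, so $\ker Y^{*}=\{0\}$; symmetrically $\ker Y=\{0\}$. Thus $Y$ is injective with dense range, so its polar decomposition $Y=V|Y|$ provides a \emph{unitary} $V\colon M_1\to M_0$; in particular $\dim M_0=\dim M_1$.

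To finish, I would conjugate the summand $M_1$ of $M$ by $V$. This leaves $P|_M=\begin{pmatrix} I & 0\\ 0 & 0\end{pmatrix}$ unchanged and replaces $Q|_M$ by $\begin{pmatrix} H & YV^{*}\\ VY^{*} & VZV^{*}\end{pmatrix}$. Here $YV^{*}=V|Y|V^{*}$, whose square is $V|Y|^{2}V^{*}=YY^{*}=H(I-H)$, so $YV^{*}=\sqrt{H(I-H)}$; and since $Z$ commutes with $|Y|=(Z(I-Z))^{1/2}$, the relation $YZ=(I-H)Y$ determines the corner $VZV^{*}$ as well, completing the normal form for $Q|_M$. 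Reassembling these blocks over the decomposition (\ref{decom}) then gives the asserted unitary equivalence.

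I expect the difficulty to be conceptual rather than computational: the block identities above are routine, but two ideas carry the proof --- peeling off the four common-eigenvector subspaces so that $P$ becomes $\begin{pmatrix} I & 0\\ 0 & 0\end{pmatrix}$, and then using the polar decomposition of the off-diagonal block $Y$, which simultaneously normalizes $Y$ to $\sqrt{H(I-H)}$ and rotates the corner into its canonical position. The one genuinely delicate point in the general Hilbert-space setting, as opposed to the matrix case, is verifying that the partial isometry in that polar decomposition is actually unitary, which is precisely what the trivial-kernel statements $\ker H=\ker(I-H)=\{0\}$ and $\ker Z=\ker(I-Z)=\{0\}$ guarantee.
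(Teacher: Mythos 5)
Your argument is correct, but there is nothing in the paper to compare it against: Theorem \ref{Halmos} is stated without proof and attributed to Halmos \cite{Hal} (see also \cite{BS}). What you have written is essentially the classical two-subspaces proof from those sources --- peel off the four intersection subspaces on which $P$ and $Q$ act as scalars so that $P|_{M_0\oplus M_1}$ becomes the projection onto $M_0$, read off $YY^*=H(I-H)$, $Y^*Y=Z(I-Z)$, $HY+YZ=Y$ from $Q^2=Q$, and use the polar decomposition of the off-diagonal block $Y$, whose partial isometry is unitary precisely because $\ker H$, $\ker(I-H)$, $\ker Z$, $\ker(I-Z)$ are all trivial; this yields both $\dim M_0=\dim M_1$ and the normal form. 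All the individual steps check out, including the slightly terse final one: $Z$ commutes with $|Y|$ because $|Y|^2=Z(I-Z)$, and cancelling the injective, dense-range operator $V|Y|V^*$ from $VZV^*\cdot V|Y|V^*=(I-H)\cdot V|Y|V^*$ gives $VZV^*=I-H$.

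One point worth flagging: your proof correctly produces $I-H$ in the lower-right corner of the generic block of $Q$, whereas the theorem as printed has $H$ there. The printed form is not even idempotent in general (the off-diagonal entry of its square is $2H\sqrt{H(I-H)}$, which equals $\sqrt{H(I-H)}$ only when $H=\tfrac12 I$), and the paper's own subsequent formula for $Q_2$ uses $I-H$. So the statement contains a typo, and what you have proved is the corrected version.
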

This theorem enables one to compute the characteristic polynomial $\det (z_0+z_1P+z_2Q)$. Indeed,
 Theorem \ref{Halmos} gives the decomposition 
\[P=\begin{pmatrix}
P_1 & 0 \\
0 & P_2 
\end{pmatrix}, \ \ \textup{and} \ \ Q=\begin{pmatrix}
Q_1 & 0 \\
0 & Q_2 
\end{pmatrix},\] where $P_1=(1,1,0,0)$, $Q_1=(1,0,1,0)$, \[ \ \ P_2=\begin{pmatrix}
I & 0 \\
0 & 0 
\end{pmatrix}, \ \ \textup{and} \ \ Q_2=\begin{pmatrix}
H & \sqrt{H(1-H)} \\
\sqrt{H(1-H)} & I-H 
\end{pmatrix}.\] Here $H=P_2Q_2P_2|_{M_0}$. Then we have 
\begin{align*}
   & \text{det}(z_0I+z_1P+z_2Q)\\
   & = \text{det}\begin{pmatrix}
z_0I+z_1P_1+z_2Q_1 & 0 \\
0 & z_0I+z_1P_2+z_2Q_2 
\end{pmatrix} 
\\ & = \det(z_0I+z_1P_1+z_2Q_1)\det(z_0I+z_1P_2+z_2Q_2).
\end{align*} 
A simple calculation gives
\begin{align}\label{P1Q1}
\det(z_0I+z_1P_1+z_2Q_1)=z_0^{k_1}(z_0+z_1)^{k_2}(z+z_2)^{k_3}(z_0+z_1+z_2)^{k_4},
\end{align}
where $k_1, k_2, k_3$, and $k_4$ are the dimensions of $L^\bot \cap N^\bot, L \cap N^\bot, L^\bot \cap N$, and $L\cap N$, respectively. Furthermore, we have
\[ \text{det}(z_0I+z_1P_2+z_2Q_2)=\text{det} \begin{pmatrix}
z_0I+z_1I+z_2H & z_2\sqrt{H(I-H)} \\
z_2\sqrt{H(I-H)} & z_0I+z_2(I-H)
\end{pmatrix}.\]
\noindent Since all the block entries are commuting and $H$ is diagonalizable, the above determinant is equal to
\begin{align}\label{generic}
&\text{det}\left((z_0I+z_1I+z_2H)(z_0I+z_2(I-H)-z_2^2H(I-H)\right)\nonumber
\\ & =\text{det}\left((z_0^2+z_0z_1+z_0z_2)I+z_1z_2(I-H)\right)\nonumber
\\ &= \prod_{x\in \hat{\sigma}(I-H)}\left(z_0^2+z_0(z_1+z_2)+z_1z_2x\right),
\end{align}
where $\hat{\sigma}(I-H)$ stands for the set of eigenvalues of $I-H$ counting multiplicity. Summarizing the computations above, we can state the following theorem. 
\begin{thm}\label{CPP}
Let $P$ and $Q$ be projection matrices of the same size. Then
\begin{align*}
&\det (z_0+z_1P+z_2Q)\\
&=z_0^{k_1}(z_0+z_1)^{k_2}(z+z_2)^{k_3}(z_0+z_1+z_2)^{k_4}\prod_{x\in \hat{\sigma}(I-H)}\left(z_0^2+z_0(z_1+z_2)+z_1z_2x\right),
\end{align*}
where $(k_1, k_2, k_3, k_4)$ and $H$ are as defined before.
\end{thm}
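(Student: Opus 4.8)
The plan is to grant Theorem \ref{Halmos} and reduce the computation of $\det(z_0I+z_1P+z_2Q)$ to two easy pieces. First I would use Theorem \ref{Halmos} to replace $(P,Q)$, up to unitary equivalence, by $(P_1\oplus P_2,\,Q_1\oplus Q_2)$ relative to the orthogonal decomposition $\mathcal{H}=\big[(L\cap N)\oplus(L\cap N^\bot)\oplus(L^\bot\cap N)\oplus(L^\bot\cap N^\bot)\big]\oplus(M_0\oplus M_1)$, where $P_1=(1,1,0,0)$, $Q_1=(1,0,1,0)$, and $P_2,Q_2$ are the $2\times2$ operator matrices displayed just before the theorem statement. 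Since $\det(UXU^{*})=\det X$, the characteristic polynomial is unchanged by this unitary equivalence; and since the determinant of a block-diagonal matrix factors over its blocks,
\[\det(z_0I+z_1P+z_2Q)=\det(z_0I+z_1P_1+z_2Q_1)\cdot\det(z_0I+z_1P_2+z_2Q_2).\]
When $M_0=M_1=\{0\}$ the second factor is an empty determinant, equal to $1$ and matching the empty product in the statement; otherwise I evaluate both factors.

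For the first factor, $P_1$ and $Q_1$ are simultaneously diagonal: on $L\cap N$ they act as the scalars $1,1$, on $L\cap N^\bot$ as $1,0$, on $L^\bot\cap N$ as $0,1$, and on $L^\bot\cap N^\bot$ as $0,0$. Reading off the linear factor contributed by each summand and collecting multiplicities gives $\det(z_0I+z_1P_1+z_2Q_1)=z_0^{k_1}(z_0+z_1)^{k_2}(z_0+z_2)^{k_3}(z_0+z_1+z_2)^{k_4}$, where $k_1,\dots,k_4$ are the dimensions of $L^\bot\cap N^\bot$, $L\cap N^\bot$, $L^\bot\cap N$, and $L\cap N$.

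For the second factor, write $z_0I+z_1P_2+z_2Q_2$ as the $2\times2$ block matrix with entries $X=(z_0+z_1)I+z_2H$, $Y=Z=z_2\sqrt{H(I-H)}$, $W=z_0I+z_2(I-H)$, where $H=P_2Q_2P_2\mid_{M_0}$ is Hermitian. Since $H$ is Hermitian it is unitarily diagonalizable; conjugating the block matrix by (a diagonalizing unitary)$\,\oplus\,$(the same unitary) I may assume $H$ is diagonal, after which $X,Y,Z,W$ are simultaneously diagonal and a permutation of the basis turns the block matrix into $\bigoplus_j\begin{pmatrix}a_j & b_j\\ c_j & d_j\end{pmatrix}$, with $a_j,b_j,c_j,d_j$ the $j$th diagonal entries of $X,Y,Z,W$. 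Hence its determinant is $\prod_j(a_jd_j-b_jc_j)=\det(XW-YZ)$, since $XW-YZ$ is diagonal with $j$th entry $a_jd_j-b_jc_j$. Using $YZ=z_2^{2}H(I-H)$ and expanding, $XW-YZ$ collapses to $(z_0^{2}+z_0z_1+z_0z_2)I+z_1z_2(I-H)$, whose determinant is $\prod_{x\in\hat{\sigma}(I-H)}\big(z_0^{2}+z_0(z_1+z_2)+z_1z_2x\big)$. Multiplying the two factors gives the theorem. (Equivalently, one may invoke the identity $\det\begin{pmatrix}X&Y\\Z&W\end{pmatrix}=\det(XW-YZ)$, valid because $Z$ and $W$ commute.)

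There is no serious obstacle: once Theorem \ref{Halmos} is available the statement is essentially a computation. The one place needing care is the passage from the $2\times2$ block determinant to $\det(XW-YZ)$ --- legitimizing the ``treat commuting blocks as scalars'' step --- which is handled cleanly by diagonalizing $H$ first. Everything else is the expansion of $XW-YZ$ and the bookkeeping of the four multiplicities $k_i$.
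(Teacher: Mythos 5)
Your proposal is correct and follows essentially the same route as the paper: apply Halmos's two-projection decomposition, factor the determinant over the block-diagonal pieces, read off the four linear factors from the simultaneously diagonal part, and reduce the $2\times 2$ operator-block determinant to $\det(XW-YZ)=\det\left((z_0^2+z_0z_1+z_0z_2)I+z_1z_2(I-H)\right)$ using that the blocks commute and $H$ is diagonalizable. Your explicit justification of the commuting-block determinant identity via diagonalizing $H$ is slightly more detailed than the paper's one-line remark, but the argument is the same.
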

It is worth noting that an infinite dimensional version of (\ref{generic}) has been given in \cite{GY,WJRQ}. In response to a suggestion of the referee, we now offer an alternative proof of Theorem \ref{main}. If $P'$ and $Q'$ are matrices of the same size and \[\det(z_0I+z_1P+z_2Q)=\det(z_0I+z_1P'+z_2Q'),\] then the decomposition (\ref{decom}) for the pairs $(P, Q)$ and $(P', Q')$ have equal dimensional components. Moreover, the positive matrices $H$ and $H'$ have the same set of eigenvalues counting multiplicity and hence are unitarily equivalent. An application of Theorem \ref{Halmos} then shows that $(P, Q)$ and $(P', Q')$ are unitarily equivalent.

According to Halmos, the first four summands in (\ref{decom}) are ``thoroughly uninteresting". A pair of projections $P$ and $Q$ are said to be in {\em generic position} if the four summands are all trivial. In this case $\ker H=\ker (I-H)=\{0\}$, indicating that $\hat{\sigma}(I-H)\subset (0, 1)$. It is a pleasure to check that the factor $z_0^2+z_0(z_1+z_2)+z_1z_2x$ in Theorem \ref{CPP} is irreducible for every $x\in (0, 1)$. 

\begin{corr}
Let $\Phi(z)$ denote the characteristic polynomial of two projection matrices. Then

a) $\Phi(z)$ is a product of linear and/or irreducible quadratic polynomials;

b) $\Phi(z)$ contains no linear factor if and only if the projections are in generic position.
\end{corr}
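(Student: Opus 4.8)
The plan is to read everything off Theorem \ref{CPP}, which already gives the complete factorization
\[
\Phi(z)=z_0^{k_1}(z_0+z_1)^{k_2}(z_0+z_2)^{k_3}(z_0+z_1+z_2)^{k_4}\prod_{x\in\hat\sigma(I-H)}\bigl(z_0^2+z_0(z_1+z_2)+z_1z_2x\bigr).
\]
For part a) I would simply observe that the four monomial/binomial prefactors are products of linear polynomials, and each remaining factor $z_0^2+z_0(z_1+z_2)+z_1z_2x$ is quadratic in $z_0$; so it suffices to verify that this quadratic is either a product of two linear forms or irreducible, and in either case the overall product is of the claimed type. Concretely, $\Phi$ is a product of polynomials each of which is linear or irreducible quadratic, which is exactly assertion a).

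For part b) the key point is to determine exactly when a factor $z_0^2+z_0(z_1+z_2)+z_1z_2x$ splits off a linear factor. Viewing it as a quadratic in $z_0$, its discriminant is $(z_1+z_2)^2-4xz_1z_2 = z_1^2+2(1-2x)z_1z_2+z_2^2$. The quadratic $z_0^2+z_0(z_1+z_2)+z_1z_2x$ factors into linear forms over $\C[z_0,z_1,z_2]$ precisely when this discriminant is a perfect square in $\C[z_1,z_2]$, i.e. when $z_1^2+2(1-2x)z_1z_2+z_2^2 = (z_1-\alpha z_2)(z_1-\beta z_2)$ with $\alpha\beta=1$ and $\alpha+\beta=-2(1-2x)=4x-2$; being a perfect square forces $\alpha=\beta=\pm1$, hence $4x-2=\pm2$, i.e. $x\in\{0,1\}$. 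Since a pair of projections is in generic position exactly when $\ker H=\ker(I-H)=\{0\}$, which is equivalent to $\hat\sigma(I-H)\subset(0,1)$ (as already noted in the text), in generic position every $x$ lies strictly between $0$ and $1$, the discriminant is not a square, each quadratic factor is irreducible, and — since the four linear prefactors only occur when the corresponding intersection subspaces are nontrivial, which they are not in generic position — $\Phi$ has no linear factor at all. Conversely, if the pair is not in generic position, then either one of $k_1,k_2,k_3,k_4$ is positive, contributing an explicit linear factor, or $H$ (resp. $I-H$) has nontrivial kernel, so $1\in\hat\sigma(I-H)$ (resp. $0\in\hat\sigma(I-H)$), and the corresponding quadratic factor $z_0^2+z_0(z_1+z_2)+z_1z_2$ or $z_0^2+z_0(z_1+z_2)$ is reducible — the latter is $z_0(z_0+z_1+z_2)$ and the former is $(z_0+z_1)(z_0+z_2)$ — producing a linear factor. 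Either way $\Phi$ has a linear factor, which establishes the contrapositive and completes b).

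The only real computation is the irreducibility check for $x\in(0,1)$, and the main (very mild) obstacle is making the perfect-square argument airtight: one must argue in the UFD $\C[z_0,z_1,z_2]$ that a degree-two homogeneous polynomial which is monic of degree $2$ in $z_0$ is reducible if and only if its discriminant (a binary quadratic form in $z_1,z_2$) is a square, and then check that $z_1^2+2(1-2x)z_1z_2+z_2^2$ is a square exactly for $x\in\{0,1\}$ by computing its own discriminant $4(1-2x)^2-4=16x(x-1)$, which vanishes iff $x\in\{0,1\}$. I expect this to be routine; everything else is bookkeeping against Theorem \ref{CPP} and the characterization of generic position already recalled in the text.

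\begin{proof}
Write, using Theorem \ref{CPP},
\[
\Phi(z)=z_0^{k_1}(z_0+z_1)^{k_2}(z_0+z_2)^{k_3}(z_0+z_1+z_2)^{k_4}\prod_{x\in\hat\sigma(I-H)}\bigl(z_0^2+z_0(z_1+z_2)+z_1z_2x\bigr).
\]
Each factor in the first four groups is linear, so for a) it suffices to show that for every eigenvalue $x$ the homogeneous quadratic $f_x:=z_0^2+z_0(z_1+z_2)+z_1z_2x\in\C[z_0,z_1,z_2]$ is either a product of two linear forms or irreducible. Since $\C[z_0,z_1,z_2]$ is a UFD and $f_x$ has degree $2$, the only alternatives are exactly these two, proving a).

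For b), first note that if the projections are not in generic position then at least one of the following holds: some $k_i>0$, in which case $\Phi$ carries one of the linear factors $z_0,z_0+z_1,z_0+z_2,z_0+z_1+z_2$; or $\ker H\neq\{0\}$, so $1\in\hat\sigma(I-H)$ and $f_1=z_0^2+z_0(z_1+z_2)+z_1z_2=(z_0+z_1)(z_0+z_2)$ divides $\Phi$; or $\ker(I-H)\neq\{0\}$, so $0\in\hat\sigma(I-H)$ and $f_0=z_0^2+z_0(z_1+z_2)=z_0(z_0+z_1+z_2)$ divides $\Phi$. In all cases $\Phi$ has a linear factor.

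Conversely, suppose the projections are in generic position. Then $k_1=k_2=k_3=k_4=0$ and, as recalled above, $\hat\sigma(I-H)\subset(0,1)$. We claim $f_x$ is irreducible for every $x\in(0,1)$, which by the factorization displayed above shows $\Phi$ has no linear factor. Regard $f_x$ as a monic quadratic in $z_0$: its discriminant is
\[
\Delta_x=(z_1+z_2)^2-4xz_1z_2=z_1^2+2(1-2x)z_1z_2+z_2^2.
\]
If $f_x$ were a product of two linear forms $(z_0-\ell_1)(z_0-\ell_2)$ with $\ell_1,\ell_2\in\C[z_1,z_2]$ linear, then $\Delta_x=(\ell_1-\ell_2)^2$ would be a square in $\C[z_1,z_2]$; but a binary quadratic form $az_1^2+bz_1z_2+cz_2^2$ with $a=c=1$ is a perfect square only if $b^2-4ac=0$, and here $b^2-4ac=4(1-2x)^2-4=16x(x-1)\neq 0$ for $x\in(0,1)$. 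Hence $\Delta_x$ is not a square, $f_x$ is irreducible, and $\Phi$ has no linear factor. This proves b).
\end{proof}
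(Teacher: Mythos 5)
Your proof is correct and follows the same route the paper intends: read the factorization off Theorem \ref{CPP} and check that the quadratic factor $z_0^2+z_0(z_1+z_2)+z_1z_2x$ is irreducible exactly when $x\in(0,1)$, which is precisely the step the paper leaves to the reader as ``a pleasure to check.'' The only cosmetic remark is that your cases $\ker H\neq\{0\}$ and $\ker(I-H)\neq\{0\}$ in the converse direction of b) are vacuous, since Theorem \ref{Halmos} already guarantees $\ker H=\ker(I-H)=\{0\}$ on $M_0$, so failure of generic position is equivalent to some $k_i>0$; this does not affect the validity of your argument.
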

\noindent Since the degree of the characteristic polynomial is equal to the size of the involved matrices, part b) exposes the following interesting phenomenon which is essentially due to the fact $\dim M_0=\dim M_1$ in Theorem \ref{Halmos}.
\begin{corr}
No two projections in $M_k(\C)$ are in generic position if $k$ is odd.
\end{corr}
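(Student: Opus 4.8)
The plan is to deduce this directly from the structure established in Theorem \ref{CPP} together with the equality $\dim M_0=\dim M_1$ from Theorem \ref{Halmos}. First I would observe that if $P,Q\in M_k(\C)$ are projections, then the Hilbert space decomposition (\ref{decom}) gives $k = k_1+k_2+k_3+k_4 + 2\dim M_0$, since the generic part $M_0\oplus M_1$ contributes $2\dim M_0$ to the dimension. Being in generic position means exactly that $k_1=k_2=k_3=k_4=0$, so that $k=2\dim M_0$. Hence $k$ must be even, which is the contrapositive of the claim.

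Alternatively — and this is the route I would actually present, since it ties the corollary to the stated conclusion about $\Phi(z)$ — I would argue via part b) of the preceding corollary. Suppose $P,Q\in M_k(\C)$ were in generic position with $k$ odd. By part b) of the corollary, $\Phi(z)=\det(z_0+z_1P+z_2Q)$ then contains no linear factor, so by part a) it is a product of irreducible quadratic polynomials. But $\deg\Phi(z)=k$, which forces $k$ to be even, a contradiction. Therefore no such pair exists when $k$ is odd.

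The argument is essentially immediate once Theorem \ref{CPP} and its corollary are in place, so there is no real obstacle; the only point requiring a word of care is the parity bookkeeping — namely that in Theorem \ref{CPP} the linear factors $z_0^{k_1}$, $(z_0+z_1)^{k_2}$, $(z_0+z_2)^{k_3}$, $(z_0+z_1+z_2)^{k_4}$ vanish in generic position and the remaining factor $\prod_{x\in\hat\sigma(I-H)}(z_0^2+z_0(z_1+z_2)+z_1z_2x)$ has even degree $2|\hat\sigma(I-H)| = 2\dim M_0$. Matching this against $\deg\Phi(z)=k$ gives $k=2\dim M_0$, which is the desired parity obstruction and completes the proof.
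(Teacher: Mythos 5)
Your proof is correct and matches the paper's own reasoning: the paper derives this corollary precisely from part b) of the preceding corollary together with the fact that $\dim M_0=\dim M_1$ in Theorem \ref{Halmos}, which is exactly the parity argument (in both of its equivalent forms) that you give. Nothing further is needed.
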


\section{The Coxeter Groups}
We now move to consider the characteristic polynomial of $n$ projections for $n\geq 3$. Interestingly, the Coxeter groups play a useful role here. A {\em Coxeter group} $W$ is generated by a set $S=\{g_1,...,g_n\}$ that satisfies the relations:

 1) $(g_ig_j)^{m_{ij}}=1,\ 1\leq i, j\leq n$, 
 
 2) $m_{ii}=1$, and $2\leq m_{ij}$ when $i\neq j$. 
 
 \noindent The associated $n\times n$ matrix $M=(m_{ij})$ is called the Coxeter matrix, and the pair $(W,S)$ is called the Coxeter system. Observe that $M$ is symmetric since $(g_ig_j)^{-1}=g_jg_i$. Note that the situation $m_{ij}=\infty$ for some $i$ and $j$ is allowed in a Coxeter matrix, in which case $g_i$ and $g_j$ are considered to be free from each other, and the subgroup generated by them is isomorphic to the infinite dihedral group $D_\infty$. Moreover, since $(g_i)^2=1$, the generators $g_i$ can be realized as reflections in a vector space. The Coxeter groups (or Weyl groups in particular) play an important role in group theory, and they are an indispensable tool in the classification of finite dimensional complex simple Lie algebras. We refer the reader to \cite{Hum2} for a treatise on the subject and to \cite{CST} for a recent study of their characteristic polynomials.

\subsection{The Tits Representation}
 Consider a Coxeter system $(W, S)$ and a basis $\{e_1,...,e_n\}$ for $\mathbb{C}^n$. The {\em Tits representation} of $W$, also known as the reflection representation or the geometric representation, is defined as \[\rho(g_i)(e_j)=e_j+2\alpha_{ij}e_i,\ \ \ 1\leq i, j\leq n,\]
 where $\alpha_{ij}=\cos{\frac{\pi}{m_{ij}}}$. Observe that $\alpha_{ii}=-1$, and $\alpha_{ij}\geq 0$ for $i\neq j$. Note in particular that if $m_{ij}$ is infinite then $\alpha_{ij}=1$. With respect to the basis $\{e_1,...,e_n\}$, we have the matrix representation
 \[\rho(g_i)=\begin{pmatrix}
1 & 0 & \hdots & 0\\
0 & 1 & \hdots & 0\\
\vdots & \vdots & \ddots & \vdots\\
\alpha_{i1} & \alpha_{i2} & \hdots & \alpha_{in}\\
\vdots & \vdots & \ddots & \vdots\\
0 & 0 & \hdots & 1
\end{pmatrix},\ \ \ i=1, ..., n.\]
\noindent Evidently, this matrix in general is not unitary with respect to the ordinary Euclidean metric on $\C^n$. However, it is unitary with respect to the inner product induced by the sesquilinear form $B$ on $\mathbb{C}^n$ such that $B(e_i, e_j)=-\alpha_{ij}$. Indeed, for all $1\leq i, j, k\leq n$, one verifies that
\begin{equation*}
\begin{split}
& B(\rho(g_k)e_i,\rho(g_k)e_j)\\
 =& B\left(e_i+2\alpha_{ki}e_k, e_j+2\alpha_{kj}e_k\right)\\
 =& B(e_i,e_j)+2\alpha_{kj}B(e_i,e_k)+2\alpha_{ki}B(e_k,e_j)+4\alpha_{ki}\alpha_{kj}B(e_k,e_k)
% \\ & \hspace{2.05cm}+4\alpha_{ki}\alpha_{kj}B(e_k,e_k)
 \\ =& B(e_i,e_j)-2\alpha_{kj}\alpha_{ki}-2\alpha_{ki}\alpha_{kj}+4\alpha_{ki}\alpha_{kj}= B(e_i,e_j).
\end{split}
\end{equation*}

Recall from (\ref{Q_A}) that for the matrices $A_i=\rho(g_i), i=1, ..., n$, the characteristic polynomial can be expressed as $Q_A(z)=\sum_{m=0}^{n} z_0^{n-m}q_m(z')$. Using Lemma \ref{q1,q2}, we derive by direct computation the following formulae:
\begin{align}
q_1(z')&=(n-2)(z_1+\cdots + z_n),\nonumber \\
q_2(z')&=\frac{1}{2}(n-1)(n-4)(z_1^2+\cdots+z_n^2)+\sum_{i<j}^{n} z_iz_j(n^2-5n+8-\alpha_{ij}^2).\label{q2}
 \end{align}
This leads to the following theorem.
\begin{thm}\label{cox}
 A Coxeter system $(W,S)$ is uniquely determined, up to isomorphism, by its characteristic polynomial with respect to the Tits representation.
\end{thm}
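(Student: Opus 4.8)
The plan is to show that the characteristic polynomial $Q_A$ with respect to the Tits representation determines the Coxeter matrix $M=(m_{ij})$ up to a simultaneous permutation of its rows and columns, which is exactly the data of the Coxeter system $(W,S)$ up to isomorphism. Recall that $W$ has a presentation with generators $g_1,\dots,g_n$ and defining relations $(g_ig_j)^{m_{ij}}=1$, so $(W,S)$ is recovered from $M$; conversely, isomorphic systems have the same $M$ after relabeling and hence the same $Q_A$ up to a permutation of $z_1,\dots,z_n$. Thus the ``if'' direction is immediate and the substance is to reconstruct $M$ from $Q_A$.

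First I would read off the number of generators. Since $Q_A(z)=\det(z_0I+z_1A_1+\cdots+z_nA_n)$ is homogeneous of degree $n$ and monic of degree $n$ in $z_0$, the integer $n$ is determined. Next, using the expansion $(\ref{Q_A})$, write $Q_A(z)=\sum_{m=0}^{n}z_0^{n-m}q_m(z')$ and extract $q_2(z')$. By formula $(\ref{q2})$, the coefficient of the monomial $z_iz_j$ with $i<j$ in $q_2$ equals $n^2-5n+8-\alpha_{ij}^2$; since $n$ is already known, this yields $\alpha_{ij}^2$ for every pair $i\neq j$ (and $m_{ii}=1$ requires no computation). The diagonal coefficients $\frac12(n-1)(n-4)$ carry no information, but they are constant across $i$ and so are unambiguously distinguished from the off-diagonal coefficients, so separating the two poses no difficulty.

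It remains to recover $m_{ij}$ from $\alpha_{ij}^2$. Since $\alpha_{ij}=\cos\frac{\pi}{m_{ij}}\ge 0$ for $i\neq j$, we have $\alpha_{ij}=\sqrt{\alpha_{ij}^2}$, and the function $m\mapsto \cos\frac{\pi}{m}$ is strictly increasing on $\{2,3,4,\dots\}\cup\{\infty\}$, taking the value $1$ only at $\infty$ (with the standing convention $\alpha_{ij}=1$ when $m_{ij}=\infty$). Hence it is injective, and therefore so is $m\mapsto\cos^2\frac{\pi}{m}$; this determines $m_{ij}$ from $\alpha_{ij}^2$. We have thus reconstructed $M$ up to the arbitrary labeling of the variables $z_1,\dots,z_n$, which is precisely an isomorphism class of Coxeter systems, and the theorem follows.

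I expect the argument to be short once $(\ref{q2})$ is available; the only point requiring care is the final step — verifying that the assignment $m_{ij}\leftrightarrow\alpha_{ij}$ (equivalently $\leftrightarrow\alpha_{ij}^2$) is genuinely injective over the full allowed range of $m_{ij}$, including the value $\infty$, so that two distinct Coxeter matrices can never produce the same collection $\{\alpha_{ij}^2:i<j\}$ and hence never the same $q_2$.
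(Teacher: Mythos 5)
Your proposal is correct and follows essentially the same route as the paper: read off $n$ from the degree, extract $q_2$ via formula (\ref{q2}), recover $\alpha_{ij}^2$ (hence $\alpha_{ij}\ge 0$ and then $m_{ij}$) from the coefficient of $z_iz_j$, and conclude that the Coxeter matrices agree, giving an isomorphism $g_i\mapsto g_i'$. Your added care about the injectivity of $m\mapsto\cos\frac{\pi}{m}$ including $m=\infty$, and about the relabeling of variables, is a reasonable elaboration of points the paper leaves implicit, but it is not a different argument.
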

\begin{proof}
Let $(W,\{g_1,...,g_n\})$ and $(W',\{g'_1,...,g'_k\})$ be two Coxeter systems with the Tits representations $\rho$ and $\rho '$, respectively. Set \[Q_\rho =\det(z_0I+z_1\rho(g_1)+\cdots+\rho_n(g_n))\] and likewise for $Q_{\rho '}$, and assume $Q_\rho=Q_{\rho'}$. It follows that \[k=\deg Q_{\rho '}=\deg Q_\rho=n,\ \ \text{and}\ \ q_2=q'_2.\] Since for each pair $i<j$ we have $(n^2-5n+8-\alpha_{ij}^2)$ as the coefficient of $z_iz_j$ in $q_2$, we must have $\alpha_{ij}^2={\alpha'}_{ij}^2$ and therefore $\alpha_{ij}=\alpha'_{ij}$. But since $\alpha_{ij}=\cos{\frac{\pi}{m_{ij}}}$, we must have that $m_{ij}=m'_{ij}$ for all $i,j\in\{1,...,n\}$. Hence, by sending $g_i$ to $g'_i$, we obtain an isomorphism between $W$ and $W'$.
\end{proof}

\subsection{Projections onto Hyperplanes}
Let us now make a connection between the Tits representation $\rho$ and projections, which will be essential in proving our next result. First, recall that the Tits representation is unitary with respect to the inner product defined by $B(e_i,e_j)=-\cos (\frac{\pi}{m_{ij}})=-\alpha_{ij}$ on $\mathbb{C}^n$. Moreover, for each $i$ since $(\rho(g_i))^2=\rho(g_i^2)=I$, we have that $\rho(g_i)=\rho^{-1}(g_i)=\rho^{*}(g_i)$, i.e. $\rho(g_i)$ is an involution. We set $p_i=\frac{I+\rho(g_i)}{2}$. It is not hard to check that $p_i$ is the orthogonal projection onto the hyperplane $\{v\in \C^n\mid B(e_i, v)=0\}$. 
\begin{thm}\label{hyper}
Suppose $p=(p_1,...,p_n)$ and $p'=(p'_1,...,p'_n)$ are tuples of linearly independent projections onto hyperplanes in $\mathbb{C}^{n}$. Then $p$ and $p'$ are unitarily equivalent if and only if $Q_p=Q_{p'}$.
\end{thm}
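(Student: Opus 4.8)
The plan is to reduce everything to the Gram matrix of the normal vectors. Write each hyperplane projection as $p_i=I-v_iv_i^{*}$, where $v_i\in\C^n$ is a unit normal to the $i$th hyperplane (well defined up to a unimodular scalar), and collect the $v_i$ into $V=[v_1\,|\,\cdots\,|\,v_n]$, which is invertible precisely because the $p_i$ are linearly independent. With $D_z=\mathrm{diag}(z_1,\dots,z_n)$ one has $z_0I+\sum_i z_ip_i=(z_0+z_1+\cdots+z_n)I-VD_zV^{*}$, and Sylvester's determinant identity $\det(I+AB)=\det(I+BA)$ turns this into $Q_p(z)=\det\big((z_0+z_1+\cdots+z_n)I-D_zG\big)$, where $G=V^{*}V$ is the Gram matrix of the $v_i$: a positive definite Hermitian matrix with every diagonal entry equal to $1$.

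Next I would expand this determinant. For each index set $S\subseteq\{1,\dots,n\}$ the principal submatrix $(D_zG)_{S,S}$ has determinant $\big(\prod_{i\in S}z_i\big)\det(G_{S,S})$, so the Plemelj--Smithies expansion (\ref{cpexpand}) writes $Q_p$ as a sum over $S$ of $\pm(z_0+z_1+\cdots+z_n)^{\,n-|S|}\big(\prod_{i\in S}z_i\big)\det(G_{S,S})$. Performing the invertible (triangular) substitution $z_0\mapsto z_0-(z_1+\cdots+z_n)$ makes the monomials $z_0^{\,n-|S|}\prod_{i\in S}z_i$ distinct, so the coefficients of $Q_p$ are, up to sign, exactly the principal minors $\det(G_{S,S})$. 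Hence $Q_p=Q_{p'}$ if and only if $G$ and the Gram matrix $G'$ of $(p'_1,\dots,p'_n)$ satisfy $\det(G_{S,S})=\det(G'_{S,S})$ for every $S$.

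On the other side, I would record the standard fact that $p$ and $p'$ are unitarily equivalent if and only if there is a unitary $U$ and unimodular scalars $\lambda_i$ with $Uv_i=\lambda_iv'_i$; since linearly independent vectors are determined up to a unitary by their Gram matrix, this holds if and only if $G'=\Lambda^{*}G\Lambda$ for some diagonal unitary $\Lambda$. Thus Theorem \ref{hyper} is equivalent to the purely matrix-theoretic assertion: two positive definite Hermitian matrices with all diagonal entries $1$ and the same principal minors differ by a diagonal unitary congruence.

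The $2\times2$ principal minors give $1-|G_{ij}|^2=1-|G'_{ij}|^2$, so all moduli $|G_{ij}|$ agree, and the $3\times3$ principal minors then pin down $\mathrm{Re}\!\big(G_{ij}G_{jk}G_{ki}\big)$ for every triple. The heart of the matter — and where I expect the real difficulty to lie — is recovering the remaining phase data: a diagonal unitary congruence alters $G_{ij}$ only by a coboundary $\overline{\lambda_i}\lambda_j$, so the genuine invariants are the arguments of the cyclic products $G_{i_1i_2}G_{i_2i_3}\cdots G_{i_ri_1}$ (which, once the graph of nonvanishing off-diagonal entries is connected, are generated by the triangles). One must show that the principal minors determine these cyclic phases; the subtle point is that a minor only ever exhibits a real, conjugation-symmetric combination of them, so the ``mirror'' alternative of replacing $G$ by $\overline{G}$ has to be ruled out. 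I would attack this using positive definiteness together with the linear independence hypothesis, reducing the case of vanishing off-diagonal entries by splitting off the corresponding block and inducting on $n$, the base case $n=2$ being immediate since there the single off-diagonal phase is itself a coboundary.
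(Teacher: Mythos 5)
Your reduction is correct, and it is a genuinely different route from the paper's: the paper passes to the reflections $g_i=2p_i-I$, asserts that the group they generate is a Coxeter group whose Tits representation is the identity map, and recovers the numbers $\alpha_{ij}$ from the coefficient of $z_iz_j$ in $q_2$, using $\alpha_{ij}=\cos(\pi/m_{ij})\ge 0$ to pass from $\alpha_{ij}^2$ to $\alpha_{ij}$. Your identity $Q_p(z)=\det\bigl((z_0+z_1+\cdots+z_n)I-D_zG\bigr)$ and the identification of the coefficients of $Q_p$ with the full list of principal minors of the Gram matrix $G$ is a cleaner and more transparent account of exactly what information $Q_p$ carries.

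However, the step you flag as ``the heart of the matter'' is not a technical difficulty to be overcome by positive definiteness or linear independence --- it is a genuine obstruction, and your own framework exhibits it. Since $G$ is Hermitian, every principal minor is real, so $G$ and $\overline{G}$ always have identical principal minors; yet a diagonal unitary congruence preserves each cyclic product $G_{i_1i_2}G_{i_2i_3}\cdots G_{i_ri_1}$ exactly, so $G$ and $\overline{G}$ are diagonally unitarily congruent only when all cyclic products are real. Concretely, take $n=3$ and
\[
G=\begin{pmatrix}1&t&t\\ t&1&it\\ t&-it&1\end{pmatrix},\qquad 0<t<1/\sqrt{3},
\]
which is positive definite with unit diagonal and is therefore the Gram matrix of three linearly independent unit vectors $v_1,v_2,v_3\in\C^3$; its triangle product is $it^3\notin\mathbb{R}$. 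The triples $p_i=I-v_iv_i^{*}$ and $p_i'=I-\overline{v_i}\,\overline{v_i}^{\,*}$ are tuples of linearly independent hyperplane projections with $Q_p=Q_{p'}$ (the coefficients are the common principal minors), but they are not unitarily equivalent, since unitary equivalence would force $G_{12}G_{23}G_{31}=\overline{G_{12}}\,\overline{G_{23}}\,\overline{G_{31}}$, i.e.\ $it^3=-it^3$. So your plan cannot be completed as stated: the conclusion holds only under an additional reality hypothesis on the inner products $\langle v_i,v_j\rangle$ (as for Tits forms, where they equal $-\cos(\pi/m_{ij})$ and $\overline{G}=G$, so the sign-recovery you describe does go through). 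The same issue is hidden in the paper's own proof, in the assertion that the identity map is a Tits representation and in the inference from $\alpha_{ij}^2={\alpha'}_{ij}^2$ to $\alpha_{ij}=\alpha'_{ij}$, both of which presuppose that the $B(e_i,e_j)$ are real and of Coxeter type.
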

\begin{proof}
 The necessity is trivial. To prove the sufficiency, we define $g_i=2p_i-I$ and $g'_i=2p'_i-I$, $1\leq i\leq n$, which are reflection matrices about the hyperplanes $p_i(\C^n)$ and $p'_i(\C^n)$, respectively. Let $W=\langle g_1,..., g_n\rangle$ and $W'=\langle g'_1,..., g'_n\rangle$ be the Coxeter groups with corresponding Coxeter matrices $M=(m_{ij})$ and resp. $M'=(m'_{ij})$. Moreover, let $e_i$ (resp. $e_i')$ be a unit vector in $\ker p_i$ (resp. $\ker q_i$). Then $\{e_1, ..., e_n\}$ (resp. $\{e'_1, ..., e'_n\}$) is linearly independent and hence a basis for $\C^n$. Observe that the identity map is the Tits representation for both $W$ and $W'$ in this case. Suppose $Q_p=Q_p'$. Then $M=M'$ by the proof of Theorem \ref{cox}. Furthermore, we have an isomorphism $\phi:W\rightarrow W'$ defined by $\phi(g_i)=g'_i$. We shall construct a unitary $U$ and show that $U^{*}g'_iU=g_i$, $\forall i\in\{1,...,n\}$, which would therefore imply the unitary equivalence of $p$ and $p'$. To proceed, we define $U(e_i)=e_i', 1\leq i\leq n$. Note that $U$ is unitary since \[B(e_i,e_j)=-\alpha_{ij}=-\alpha'_{ij}=B(e'_i,e'_j).\] Moreover, we have
 \begin{align*}
U^{*}g'_iU(e_j)  & = U^{*}(U(e_j)+2\alpha'_{ij}U(e_i))
 \\ & = e_j+2\alpha_{ij}e_i=g_i(e_j).
\end{align*}
\end{proof}
The following is an immediate consequence.
\begin{corr}
Suppose $p=(p_1,...,p_n)$ and $p'=(p'_1,...,p'_n)$ are tuples of linearly independent rank-$1$ projections in $\mathbb{C}^{n}$. Then they are unitarily equivalent if and only if their characteristic polynomials coincide.  
\end{corr}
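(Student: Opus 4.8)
The plan is to deduce this from Theorem~\ref{hyper} by passing to the complementary projections. Recall that a rank-$1$ projection $p_i$ on $\C^n$ is the orthogonal projection onto a line $\ell_i:=\ran p_i$; hence $\tilde p_i:=I-p_i$ is the orthogonal projection onto the hyperplane $\ell_i^{\perp}$, and $\ker\tilde p_i=\ell_i$. Thus $\tilde p=(\tilde p_1,\dots,\tilde p_n)$ is a tuple of projections onto hyperplanes in $\C^n$, and — understanding, as in Theorem~\ref{hyper}, that a tuple of rank-$1$ projections is linearly independent precisely when the lines $\ran p_1,\dots,\ran p_n$ form a basis of $\C^n$ — the hypothesis says exactly that $\tilde p$ (and likewise $\tilde p'=(I-p'_1,\dots,I-p'_n)$) meets the linear-independence requirement of Theorem~\ref{hyper}. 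So it suffices to check that both unitary equivalence and the characteristic polynomial are essentially unchanged under the passage $p_i\mapsto\tilde p_i$.

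For unitary equivalence this is immediate: for a unitary $U$ one has $Up_i=p'_iU$ for every $i$ if and only if $U\tilde p_i=U-Up_i=U-p'_iU=\tilde p'_iU$ for every $i$, since $UI=IU$; hence $p$ and $p'$ are unitarily equivalent precisely when $\tilde p$ and $\tilde p'$ are. For the characteristic polynomial, a one-line computation shows that $Q_{\tilde p}$ is obtained from $Q_p$ by an invertible affine substitution of the variables:
\begin{align*}
Q_{\tilde p}(z_0,z_1,\dots,z_n)&=\det\Bigl(z_0I+\sum_{j=1}^{n}z_j(I-p_j)\Bigr)\\
&=\det\Bigl((z_0+z_1+\cdots+z_n)I-\sum_{j=1}^{n}z_jp_j\Bigr)\\
&=Q_p\bigl(z_0+z_1+\cdots+z_n,\,-z_1,\dots,-z_n\bigr).
\end{align*}
Since $(z_0,z_1,\dots,z_n)\mapsto(z_0+z_1+\cdots+z_n,-z_1,\dots,-z_n)$ is an invertible linear map on $\C^{n+1}$, it follows that $Q_p=Q_{p'}$ if and only if $Q_{\tilde p}=Q_{\tilde p'}$.

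Combining these with Theorem~\ref{hyper} applied to the hyperplane tuples $\tilde p$ and $\tilde p'$ yields the chain
\[
Q_p=Q_{p'}\iff Q_{\tilde p}=Q_{\tilde p'}\iff \tilde p\ \text{and}\ \tilde p'\ \text{are unitarily equivalent}\iff p\ \text{and}\ p'\ \text{are unitarily equivalent},
\]
which is the assertion. The only point that genuinely requires attention is the bookkeeping in the first paragraph: checking that the linear-independence hypothesis on the rank-$1$ projections translates exactly into the hypothesis of Theorem~\ref{hyper} for the complementary hyperplane projections. Beyond that the argument is purely formal, the substantive content having already been established in Theorems~\ref{cox} and~\ref{hyper}.
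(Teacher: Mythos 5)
Your proposal is correct and follows exactly the route the paper takes: the paper's own (one-line) justification is that $I-p_i$ and $I-p'_i$ are projections onto hyperplanes, so the corollary follows from Theorem~\ref{hyper} after a change of variables in the characteristic polynomial. You have simply written out the details (the substitution $(z_0,z')\mapsto(z_0+z_1+\cdots+z_n,-z')$ and the preservation of unitary equivalence and of the linear-independence hypothesis) that the paper leaves implicit.
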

\noindent In this case, since $I-p_i$ and $I-p'_i$ are projections onto hyperplanes, the corollary follows directly from Theorem \ref{hyper} after a change of variables in the characteristic polynomial.

\section{Concluding Remarks}

Results in this paper serve as yet another clear evidence that joint characteristic polynomial can play a useful role in the study of multi-matrix systems. This role is in fact outstanding when the matrices are noncommuting because few other effective tools are currently available. Application of this idea to other matrices with algebraic properties is a tempting project, and there is indeed much to anticipate.
In particular, the work here inspires the curiosity about the characteristic polynomial for three or more projection matrices. We end this paper with some natural questions in this regard.

\begin{prob} Consider three projection matrices $p=(p_1, p_2, p_3)$ of equal size.

a) Can their joint characteristic polynomial $Q_p$ be computed?

b) What are the possible degrees of $Q_p$'s irreducible factors?

c) Is $Q_p$ a complete unitary invariant of the tuple $p$?
\end{prob}

\vspace{5mm}

\end{document}